\DeclareSymbolFont{cyrletters}{OT2}{wncyr}{m}{n}
\DeclareMathSymbol{\Sha}{\mathalpha}{cyrletters}{"58}
\def\Pois{\mathrm{Pois}}
\def\Lie{\mathrm{Lie}}
\def\Vecc{\mathrm{Vec}}
\def\GD {\mathop {\fam0 GD }\nolimits}
\def\SGD {\mathop {\fam0 SGD }\nolimits}
\def\wSGD{\mathrm{wSGD}}
\def\Com{\mathrm{Com}}
\def\Nov{\mathrm{Nov}}
\def\Der{\mathrm{Der}}
\def\wt{\mathop {\fam 0 wt}\nolimits}
\newtheorem{theorem}{Theorem}
\newtheorem{lemma}{Lemma}
\newtheorem{proposition}{Proposition}
\newtheorem{corollary}{Corollary}
\theoremstyle{definition}
\newtheorem{remark}{Remark}
\title[Gelfand--Dorfman algebras]{On the special identities of Gelfand--Dorfman algebras}
\author[P.S. Kolesnikov, B.K. Sartayev]{P.S. Kolesnikov$^{1)}$, 
B.K. Sartayev$^{1,2)}$}
\address{${}^{1)}$Sobolev Institute of Mathematics, \\
Akad. Koptyug prosp., 4\\
630090 Novosibirsk, Russia}
\address{${}^{2)}$Suleyman Demirel University, \\
Abylai khan street, 1/1 \\
040900 Kaskelen, Kazakhstan}
\thanks{This work was supported by the Program of fundamental scientific researches of the Siberian Branch of Russian Academy of Sciences, I.1.1, project 0314-2019-0001. 
The second author was supported by grant AP08052405 of Ministry of Education and Science of Republic of Kazakhstan}
\begin{document}

\begin{abstract}
A Gelfand--Dorfman algebra (GD-algebra) is said to be special if it can be embedded into a differential Poisson algebra.
In this paper, we prove that the class of all special GD-algebras is closed with respect to homomorphisms and thus forms a variety. 
We describe a technique for finding potentially all special identities of GD-algebras and derive two known special identities of degree 4 in this way.
By computing the Gr\"obner basis for the corresponding shuffle operad, we show that these two identities imply all special ones up to degree~5.
\end{abstract}

\maketitle

\section*{Introduction}

A linear space $V$ with two bilinear operations $ \circ $ and $[\cdot,\cdot]$ is called 
a {\em Gelfand--Dorfman algebra} (or simply $\GD$-algebra) if $(V,\circ)$ is a 
Novikov algebra, $(V,[\cdot,\cdot])$ is a Lie algebra, and the following 
additional identity holds:
\begin{gather}\label{eq:GD1}
b\circ [a,c]=[a,b\circ c]-[c,b\circ a]+[b,a]\circ c-[b,c]\circ a.
\end{gather}
Recall that the variety of Novikov algebras is defined by the following identities:
\begin{gather}
(a\circ b)\circ c-a\circ (b\circ c)=(b\circ a)\circ c-b\circ (a\circ c), \label{eq:LSymm} \\
(a\circ b)\circ c=(a\circ c)\circ b. \label{eq:RComm}
\end{gather}

The axioms above emerged in the paper \cite{GD83} as a tool for constructing Hamiltonian operators 
in formal calculus of variations. Later it was shown \cite{Xu2000} that $\GD$-algebras are in one-to-one
correspondence with quadratic Lie conformal algebras playing an important role in the theory
of vertex operators. 
The class of $\GD$-algebras is governed by a binary quadratic operad (\cite{GinzburgKapranov}, 
see also \cite{bremner-dotsenko}
for the definition) denoted $\GD $. As it was shown in \cite{KSO2019}, 
the Koszul dual operad $\GD^!$ corresponds to the class 
of differential Novikov--Poisson algebras introduced in \cite{BCZ2018-b}. The latter algebras 
play an important role in the combinatorics of derived operations on non-associative 
algebras \cite{KSO2019}. 

In the present paper, we study special $\GD$-algebras, i.e., those embeddable 
into Poisson algebras with a derivation. We prove in section~\ref{sec:Special}
that the class of special $\GD$-algebras 
is closed under homomorphic images and thus forms a variety. 
Non-special GD-algebras exist: the examples were found implicitly in \cite{KSO2019} 
and explicitly  in \cite{KolPan2019}. Note that all these examples 
are of dimension~three. We apply the Gr\"obner--Shirshov bases 
technique for Poisson algebras to prove that all 2-dimensional GD-algebras are special.

In section \ref{identities}, we give a technical method to find all special identities of $\GD$-algebras and explicitly find all special identities of GD-algebras up to degree $5$. 
In section \ref{programm}, we prove that all special identities up to degree $5$ are consequences of two independent special identities of degree $4$
found in \cite{KSO2019}. For that purpose, we convert the symmetric operad GD with special identities of degree $4$ into a shuffle operad \cite{bremner-dotsenko}
and compute the first five components of its Gr\"obner basis \cite{DotKhor2010} by means of the  computer algebra package \cite{DotsHij}.
In the same way, we calculate the dimensions of GD operad up to degree 6.

\section{Special GD-algerbas}\label{sec:Special}

Suppose $(P,\cdot,\{\cdot,\cdot\})$ is a Poisson algebra with a derivation~$d$.
Hereinafter, we will denote $d(x)$ by $x'$ for simplicity.
Hence, $(P,\cdot )$ is an associative and commutative algebra, $(P,\{\cdot,\cdot\})$
is a Lie algebra and the {\em Leibniz identity} holds:
\[
 \{x,yz\} = y\{x,z\}+z\{x,y\},\quad x,y,z\in P.
\]
The linear map $d:P\to P$ acts as a derivation relative to both operations:
\[
 (xy)'=xy'+x'y,\quad \{x,y\}' = \{x',y\}+\{x,y'\}, \quad x,y\in P.
\]
Then the same space $P$ equipped with operations
\[
x\circ y=x y', \quad [x,y]=\{x,y\},
\] 
is a GD-algebra denoted $P^{(d)}$ \cite{Xu2002}.
A GD-algebra $V$ is said to be {\em special} if it can be embedded into a 
GD-algebra $P^{(d)}$ for an appropriate differential Poisson algebra~$P$.

It is clear that the class of special GD-algebras is closed with respect to 
subalgebras and Cartesian products, so the class of all homomorphic images 
of special GD-algebras is a variety denoted $\SGD$.

The relation between differential Poisson algebras and GD-algebras described above 
is similar to the well-known relation between 
associative and Jordan algebras \cite{Cohn} (see also \cite[Chapter~3]{ZSSS}). 
However, in contrast to the Jordan algebra case, the class of special 
GD-algebras turns to be closed with respect to homomorphisms.

\begin{theorem}\label{thm:SpecialImage}
Let $V$ be a special GD-algebra. 
Then every homomorphic image of $V$ is special.
\end{theorem}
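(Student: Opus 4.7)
The plan is to build a differential Poisson algebra $Q$ together with a GD-embedding of $W$ into $Q^{(d)}$. Concretely, choose any witness $V\hookrightarrow P^{(d)}$ of the speciality of $V$, set $I:=\ker(V\twoheadrightarrow W)$, and let $J$ be the differential Poisson ideal of $P$ generated, under the commutative product, the Poisson bracket, and the derivation $d$, by the subspace $I\subseteq P$. Put $Q:=P/J$; this is again a differential Poisson algebra with induced derivation. The composite $V\hookrightarrow P\twoheadrightarrow Q$ kills every element of $I$ and hence factors through a GD-morphism $\bar\imath:W\to Q^{(d)}$. The theorem then reduces to verifying that $\bar\imath$ is injective, that is, $V\cap J=I$.

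For the nontrivial inclusion $V\cap J\subseteq I$, I would describe $J$ as the linear span of iterated expressions built from elements of $I$ via the commutative product, the Poisson bracket, and $d$. Three inputs organize the rewriting: the hypothesis that $I$ is a GD-ideal of $V$ (so $v\circ i=v\,d(i)\in I$, $i\circ v=i\,d(v)\in I$ and $\{v,i\}=[v,i]\in I$ for all $v\in V$, $i\in I$), the Leibniz rules for $d$ and for the Poisson bracket, and the GD-identity~\eqref{eq:GD1}. Using these, each element of $J$ that happens to land in $V$ should be reducible to a $\mathbb{k}$-linear combination of elements of $I$.

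The main obstacle is making this reduction systematic. A clean route is to take $P$ to be the universal enveloping differential Poisson algebra $\mathcal{U}(V)$ of $V$ (the left adjoint to $P\mapsto P^{(d)}$), which embeds $V$ into $\mathcal{U}(V)^{(d)}$ precisely because $V$ is special; since $\mathcal{U}(V/I)$ is the quotient of $\mathcal{U}(V)$ by the differential Poisson ideal generated by $I$, the question becomes whether $V/I$ embeds in $\mathcal{U}(V/I)^{(d)}$. Fix a linear basis $B=B'\sqcup B''$ of $V$ with $B''$ a basis of $I$, and build a PBW-type linear basis of $\mathcal{U}(V)$ consisting of ordered monomials in the derivatives $\{d^n b:b\in B,\ n\ge 0\}$, straightened with respect to the Leibniz, Jacobi, and GD relations. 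In this basis the ideal generated by $I$ is spanned by the monomials involving at least one $d$-iterate of an element of $B''$, and its intersection with the subspace $V$ is visibly~$I$. The technical heart of the argument is therefore a Gr\"obner--Shirshov-style confluence check for this rewriting system, in the same spirit as the shuffle-operad computation carried out in sections~\ref{identities}--\ref{programm}.
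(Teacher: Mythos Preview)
Your overall strategy---embed $V$ in a differential Poisson envelope, pass to the differential Poisson ideal $J$ generated by $I=\ker(V\twoheadrightarrow W)$, and prove $V\cap J=I$---is the paper's as well. The gap is in how you propose to control that intersection. You assert a PBW-type basis of $\mathcal U(V)$ in the letters $d^{n}b$ ($b\in B=B'\sqcup B''$) and then claim that $J$ is spanned by the basis monomials containing a $B''$-letter. But that second claim is precisely the statement that the $B'$-only monomials give a basis of $\mathcal U(V)/J\cong\mathcal U(V/I)$, i.e.\ a PBW theorem for $V/I$, which is equivalent to the speciality of $V/I$ you set out to prove. The ``confluence check'' you defer is therefore not a technical tail but the entire content of the theorem, and nothing in your outline shows where the hypothesis that $V$ (rather than $V/I$) is special would enter it. Note too that the rewriting rules $b_{1}\,d(b_{2})\to b_{1}\circ b_{2}$ and $\{b_{1},b_{2}\}\to[b_{1},b_{2}]$ send pure $B'$-expressions to elements with $B''$-components whenever the structure constants of $V$ mix them, so even formulating the normal form you need is delicate.

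The paper sidesteps all of this by first reducing to $V=\SGD\langle X\rangle$ and taking the envelope to be the \emph{free} differential Poisson algebra $\Pois\Der\langle X,d\rangle$, in which (by \cite{KSO2019}) $\SGD\langle X\rangle$ is exactly the weight~$-1$ homogeneous component. Any $f\in\hat I$ of weight~$-1$ is a finite sum $\sum_{i}F_{i}(X,t)\big|_{t=u_{i}}$ with $u_{i}\in I$; since $\wt(u_{i})=-1$ one may take $\wt F_{i}=-1$, so each $F_{i}\in\SGD\langle X\cup\{t\}\rangle$, and the substitution $t\mapsto u_{i}$ is then a GD-homomorphism of free $\SGD$-algebras carrying $F_{i}$ into~$I$. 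The weight grading on a free object replaces the Gr\"obner--Shirshov analysis entirely; reducing to the free case is the missing idea in your sketch.
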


\begin{proof}
As $V$ is a special GD-algebra, it is a homomorphic image of 
the free special GD-algebra $\SGD\<X \>$ generated by an appropriate set~$X$. 
Therefore, it is enough to show that for every set $X$ all homomorphic images of 
$\SGD\<X\>$ are special.

Let us recall the structure of $\SGD\<X\>$ \cite{KSO2019}. Consider 
the free differential Poisson algebra 
$\Pois\Der\<X,d\>$ generated by the set $X$. 
The operation set of this system 
consists of a commutative multiplication, Poisson bracket 
$\{\cdot,\cdot \}$, and a unary operation $d$ which acts as a derivation with 
respect to both binary operations. 
As above, we will write $u'$ for $d(u)$.
Define the {\em weight} of a monomial from $\Pois\Der\<X,d\>$ by induction as 
follows:
\[
\begin{gathered}
\wt(x)=-1,\quad x\in X; \quad \wt(u') = \wt(u)+1; \\
\wt(\{u,v\})=\wt(u)+\wt(v)+1; \quad \wt(uv)=\wt(u)+\wt(v).
\end{gathered}
\]
For a weight-homogeneous polynomial  $f\in \Pois\Der\<X,d\>$ 
we denote by $\wt(f)$ the weight of its monomials.

As shown in \cite{KSO2019},
$\SGD\<X\>$ is isomorphic to the subspace of 
$\Pois\Der\<X,d\>^{(d)}$ span\-ned by all monomials of 
weight~$-1$.

Suppose $I$ is an ideal of the GD-algebra $\SGD\<X\>$, the latter is immersed into $ \Pois\Der\<X,d\>$.
Consider the ideal $\hat I$ of 
$\Pois\Der\<X,d\>$ generated by~$I$. 
It is enough to show 
$\hat I \cap \SGD\<X\> = I$. If the latter holds, then
$\Pois\Der\<X,d\>/\hat I$ is a differential Poisson envelope 
of the GD-algebra $V =\SGD\<X\>/I$. 

An arbitrary element in $f\in \hat I$ may be obtained as
\[
f= \sum_i F_i(X,t)|_{t=u_i}, 
\]
where $F_i(X,t)\in \Pois\Der\<X\cup\{t\}, d\>$, 
$u_i\in I\subset \SGD\<X\>\subset \Pois\Der\<X,d\>$.
Note that $\wt(f)=\wt(u_i)=-1$,
so we may assume 
$\wt (F_i) = -1$ for all~$i$ (otherwise, it is enough to choose 
the homogeneous components of weight $-1$ for each $F_i$).
Therefore, 
\[
F_i(X,t) \in \SGD \<X\cup \{t\} \>, 
\]
and 
$F_i(X,u_i) \in I$. Hence, $f\in I$.
\end{proof}

\begin{corollary}\label{cor:SGD-Var}
 The class of special Gelfand--Dorfman algebras forms a variety.
\end{corollary}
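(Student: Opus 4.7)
The plan is to combine Theorem~\ref{thm:SpecialImage} with the two easy closure properties already noted in the paragraph preceding the theorem, and invoke Birkhoff's HSP characterization of varieties.

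First, I would verify closure of the class of special GD-algebras under subalgebras and Cartesian products. Closure under subalgebras is immediate: if $V$ embeds into $P^{(d)}$ for some differential Poisson algebra $P$, then any GD-subalgebra $W\subseteq V$ embeds into the same $P^{(d)}$. For Cartesian products, if $V_i\hookrightarrow P_i^{(d)}$ is a family of embeddings of special GD-algebras into differential Poisson algebras, then $\prod_i V_i$ embeds into $\prod_i P_i^{(d)}=\bigl(\prod_i P_i\bigr)^{(d)}$, where $\prod_i P_i$ carries the componentwise product, bracket and derivation. Hence the Cartesian product is again special.

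Next, I would apply Theorem~\ref{thm:SpecialImage}, which supplies the remaining closure property: every homomorphic image of a special GD-algebra is special. Together, the class of special GD-algebras is closed under the operators $\mathrm{H}$, $\mathrm{S}$, $\mathrm{P}$.

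Finally I would invoke Birkhoff's theorem: a class of universal algebras of fixed signature is a variety (i.e.\ is equationally definable) if and only if it is closed under homomorphic images, subalgebras, and Cartesian products. The three closures established above therefore give that the class of special GD-algebras is a variety, namely the variety $\SGD$ introduced before Theorem~\ref{thm:SpecialImage}. There is no real obstacle here: all the work has already been done in Theorem~\ref{thm:SpecialImage}, and the corollary is essentially a bookkeeping statement recording that the HSP-closure of the class reduces to the class itself.
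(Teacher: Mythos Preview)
Your argument is correct and matches the paper's implicit reasoning: the text before Theorem~\ref{thm:SpecialImage} already records closure under subalgebras and Cartesian products, Theorem~\ref{thm:SpecialImage} supplies closure under homomorphic images, and Birkhoff's HSP theorem then gives the corollary. The paper states no separate proof for the corollary precisely because, as you note, it is just this bookkeeping observation.
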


\begin{remark}
Novikov algebras are particular cases of GD-algebras (with $[x,y]=0$). 
It was shown in \cite{dzhuma} that the free Novikov algebra embeds into 
the free differential commutative algebra. Hence, restricting the proof 
of Theorem \ref{thm:SpecialImage} to Novikov algebras we obtain another 
proof of speciality of all Novikov algebras \cite{BCZ2017}.
\end{remark}

The examples of non-special GD-algebras  were constructed in \cite{KSO2019}, \cite{KolPan2019}. 
It is worth mentioning that all these examples are of 
dimension~3.
In the next statement we show that dimension 3 is a minimal one: all 2-dimensional 
GD-algebras are special. 

\begin{theorem}\label{thm:2dimSpecial}
Let $V$ be a 2-dimensional GD-algebra. Then $V$ is special.
\end{theorem}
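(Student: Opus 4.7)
The plan is to classify all 2-dimensional GD-algebras up to isomorphism and then, for every isomorphism class, exhibit an explicit embedding into $P^{(d)}$ for a suitable differential Poisson algebra~$P$. Since 2-dimensional Lie algebras over a field of characteristic zero fall into two types---the abelian one and the non-abelian one with $[e_1,e_2]=e_2$---the classification splits into two cases.

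In the abelian case, $V$ is simply a 2-dimensional Novikov algebra, and the remark above (combining Theorem~\ref{thm:SpecialImage} with the embedding of the free Novikov algebra into a free differential commutative algebra from \cite{dzhuma}) already shows that every Novikov algebra is special. So nothing further is required here.

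In the non-abelian case, fix a basis $e_1,e_2$ with $[e_1,e_2]=e_2$ and set $e_i\circ e_j = \sum_k \gamma_{ij}^k e_k$. Substituting into the Novikov identities \eqref{eq:LSymm}, \eqref{eq:RComm} and the compatibility identity \eqref{eq:GD1} on every triple of basis elements yields a finite polynomial system in the eight unknowns $\gamma_{ij}^k$. Quotienting by the action of the automorphism group of the non-abelian 2-dimensional Lie algebra---which fixes $e_2$ only up to a scalar and allows shifts of $e_1$ by multiples of $e_2$---reduces the solution set to a short list of normal forms. For each normal form I would try to build a concrete differential Poisson envelope by taking $P$ to be a small polynomial algebra equipped with a simple Poisson bracket (of the form $\{x,y\}=\pm y$, say) and a well-chosen derivation~$d$, then realize $e_1,e_2$ as elements $u_1,u_2\in P$ for which $u_i\cdot d(u_j)$ reproduces the prescribed Novikov product and $\{u_i,u_j\}$ reproduces the bracket.

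I expect the main obstacle to be exactly this envelope-construction step for the most ``twisted'' normal form(s): a naive polynomial ansatz may well fail, forcing one either to enlarge $P$ by auxiliary generators or, as suggested by the introduction, to run a Gr\"obner--Shirshov basis calculation inside the candidate differential Poisson algebra in order to check that the images of $e_1,e_2$ remain linearly independent. Theorem~\ref{thm:SpecialImage} gives useful slack here---it suffices to realize $V$ as a homomorphic image of \emph{some} special GD-algebra, so one is allowed to start from a larger envelope and quotient.
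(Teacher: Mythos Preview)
Your plan matches the paper's proof essentially step for step. In the non-abelian case the paper derives the normal form $u\circ u=\alpha u+\delta v$, $u\circ v=\gamma v$, $v\circ u=\alpha v$, $v\circ v=0$ and then treats the three sub-cases $\alpha\ne\gamma$, $\alpha=\gamma\ne0$, $\alpha=\gamma=0$ by exhibiting explicit small polynomial differential Poisson envelopes---the last one (with $\delta\ne0$) indeed being the ``twisted'' case that requires a Gr\"obner basis check, exactly as you anticipated.
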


\begin{proof}
Let $X=\{u,v\}$ be a basis of a 2-dimensional GD-algebra $V$. 
If $[u,v]=0$ then $V$ is a pure Novikov algebra and by \cite{BCZ2017} $V$ embeds 
into the 
commutative differential algebra $\Com\Der\<X\>/(xy'-x\circ y)$
which may be considered as a Poisson one with respect to the
trivial bracket.

If $V$ is not abelian as a Lie algebra then we may assume $[u,v]=v$.
It is straightforward do deduce from \eqref{eq:GD1}, \eqref{eq:LSymm},
and \eqref{eq:RComm} that the multiplication table for $\circ $ on $V$ has the 
following form:
\[
\begin{gathered}
u\circ u=\alpha u+\delta v, \\
v\circ v=0, \\
u\circ v=\gamma v, \\
v\circ u=\alpha v,
\end{gathered}
\]
for $\alpha,\gamma,\delta \in \Bbbk $.
It is enough to show that $V$ can be embedded into a differential Poisson 
algebra. We consider three cases:
(1) $\alpha\neq\gamma$, 
(2) $\alpha=\gamma \neq 0$,  
(3) $\alpha=\gamma=0$.

Case 1: $\alpha\neq\gamma$.
Note that 
$u\circ v-v\circ u=(\gamma-\alpha)v=(\gamma-\alpha)[u,v]$,
i.e., 
\[
[u,v]={\frac{1}{\gamma-\alpha}}(u\circ v-v\circ u).
\]
Therefore, the structure of a GD-algebra on $V$ is almost completely similar to
the commutator GD-algebra on a Novikov algebra considered in \cite{KolPan2019}. 
The only difference is the multiplicative constant $\frac{1}{\gamma-\alpha}$ 
which does not affect the construction of a differential Poisson envelope.  
Namely, consider the free differential commutative algebra 
$F=\Com\Der\<\{u,v\},d\>=\Com\<u,v,u',v',u'',v'',\dots  \>$, 
and define a bracket $\{\cdot,\cdot\}$ on $F$ as follows.
On the generators, let
\begin{equation}\label{eq:Bracket1}
\{u^{(m)},v^{(n)}\}={\frac{1}{\gamma-\alpha}}
 ((n-1)u^{(m+1)}v^{(n)}-(m-1)u^{(m)}v^{(n+1)}), \quad 
 n,m\ge 0.
\end{equation}
Then apply the Leibniz rule to expand the bracket on the entire~$F$.
The bracket obtained is proportional to the one considered in 
\cite{KolPan2019}. Therefore, the operation
\eqref{eq:Bracket1} 
satisfies the Jacobi identity, and
the operation $d$ is a derivation with respect to this bracket. 
Consider the ideal $I_V$ of the free differential commutative algebra $F$ 
generated by $xy'-x\circ y$, $x,y\in \{u,v\}$.
The ideal $I_V$
is invariant under all operations $\{f,\cdot\}$, $f\in F$.
Hence, the universal differential commutative envelope 
$F/I_V$ of the Novikov algebra $V$ is also a differential 
Poisson envelope of $V$ as of a GD-algebra. 

Case 2: $\alpha=\gamma\neq 0$. An obvious change of variables 
$\frac{1}{\alpha}u -\frac{\delta}{\alpha^2}v \to u$ ($v\to v$) leads to the 
following multiplication table in $V$:
\begin{equation}\label{eq:Mult_Table-2}
{}
[u,v]=\frac{1}{\alpha} v, \quad
u\circ u=u, \quad u\circ v=v, \quad v\circ u=v, \quad v\circ v=0.
\end{equation}

Consider the polynomial algebra $\Bbbk [x,e]$ equipped with two commuting 
derivations 
\[
d_1 = \dfrac{\partial}{\partial x},\quad 
d_2 = \dfrac{e}{\alpha }\dfrac{\partial }{\partial e}.
\]
Then $\{f,g\} = d_1(f)d_2(g)-d_2(f)d_1(g)$, 
$f,g\in \Bbbk [x,e]$,
is a Poisson bracket on $\Bbbk[x,e]$, and $d_1$ is a derivation on the Poisson 
algebra obtained.
In particular,
\begin{equation}\label{eq:Bracket2}
\{x,e\} = \frac{1}{\alpha } e.
\end{equation}
Denote by $A$ the quotient of $\Bbbk [x,e]$ modulo the ideal $I$ generated by 
$e^2$. As $d_1(I), d_2(I)\subseteq I$, the commutative algebra $A$
is a differential Poisson algebra generated by $x$ and $e$ with a bracket 
defined via \eqref{eq:Bracket2} and with a derivation 
\[
d(x)=1,\quad d(e)=0.
\]
Now, the initial GD-algebra $V$ embeds into $A^{(d)}$ by
\[
u \mapsto x,\quad v\mapsto ex.
\]
Indeed, it is straightforward to check that this 
map preserves the multiplication table \eqref{eq:Mult_Table-2}.

Case 3: $\alpha=\gamma=0$.
If $\delta=0$ then $V$ is a pure Lie algebra and thus embeds 
into the associated graded Poisson algebra $P(V)=\mathrm{gr}\,U(V)$ of the 
universal associative envelope of~$V$ with zero derivation. 

If $\delta\neq 0$ then, up to a change of variables, we may assume
\[
[u,v]=v,\quad
u\circ u=v,\quad v\circ v=u\circ v=v\circ u=0.
\]

Consider the polynomial algebra $F=\Bbbk [u,v,u',v']$
in four formal variables. Define a skew-symmetric 
bracket $\{\cdot ,\cdot \}$ on $F$ by the Leibniz rule
starting from
\begin{equation}\label{eq:GSB-2}
\begin{gathered}
\{u,v\}=v, \quad \{u,u'\}=u',\quad \{u,v'\}=2v', \\
\{v,u'\}=v',\quad \{v,v'\}=\{u',v'\}=0.
\end{gathered}
\end{equation}
This is a Poisson bracket since it is straightforward 
to check the Jacobi identity on the generators.

Let $I$ be the ideal of $F$ generated by 
\[
uu'-v,\ uv',\ vu',\ vv',\ vv,\ u'u'-v',\ u'v',\ v'v'.
\]
These polynomials form a Gr\"obner basis in $F=\Bbbk [u,v,u',v']$, 
so the quotient $A=F/I$ contains $V$ as a subspace. 
The ideal $I$ is closed under applying the bracket $\{x,\cdot \}$ for 
$x=u,v,u',v'$. Hence, $A=F/I$ is a Poisson algebra.

The ideal $I$ is closed under the derivation $d$ of $F$ (as of commutative algebra) defined by
\[
d(u)=u',\quad d(v)=v',\quad d(u')=0,\quad d(v')=0.
\]
Therefore, $d$ induces a derivation on $A$.

It remains to check that $d$ is a derivation 
relative to the Poisson bracket on $A$,
i.e., $d\{f,g\} =  \{d(f),g\} + \{f,d(g)\}$ for  $f,g\in A$.
 Note that the linear basis of $A$ consists of reduced monomials relative 
 to the Gr\"obner basis \eqref{eq:GSB-2}: $1,u',v',u^n, u^mv$, for $n,m\ge 0$.
By definition, 
\[
 d(u^n) = \begin{cases}
           0,& n=0,\\
           u',& n=1, \\
           nu^{n-2}v, & n\ge 2,
          \end{cases}
\quad 
 d(u^mv) = \begin{cases}
            v', & m=0, \\
            0, & m\ge 1.
           \end{cases}
\]
If $f=u$, $g=v$ then 
\[
 \{d(f),g\}+\{f,d(g)\} = \{u',v\}+\{u,v'\}=-v'+2v'=v' = d\{f,g\}.
\]
If $f=u^n$, $n>1$, $g=v$ then
\begin{multline}\nonumber
 \{d(f),g\}+\{f,d(g)\} = \{nu^{n-2}v,v\}+\{u^nv',v'\}\\
  =n(n-2)u^{n-3} vv + 2nu^{n-1}v'v'= 0= d\{f,g\}.
\end{multline}
For the remaining pairs of basic elements $f$, $g$, the derivation property 
for $d$ may be checked in a similar way.

Hence, $A$ is a differential Poisson algebra. 
It follows from the definition of $d$ and $A$ that 
$V$ as a GD-algebra embeds into $A^{(d)}$.
\end{proof}

\section{Special identities of GD-algebras}\label{identities}

Since both classes $\GD $ and $\SGD$ are varieties and thus defined 
by some sets of identities, the interesting problem is 
to determine a list of those identities that hold in 
$\SGD$ but do not hold in $\GD $. As in the case of Jordan algebras, let us call such identities {\em special}.

In \cite{KSO2019}, the following description 
of $\SGD $ was proposed: the operad of special 
$\GD$-algebras is a sub-operad in the Manin white product $\GD^!\circ \Pois$. 
Here $\Pois $ is the operad of Poisson algebras 
and $\GD^! $ is Koszul dual to $\GD $.
However, finding special identities 
in this way is technically hard since the 
entire operad $\GD^!\circ \Pois$ is pretty large.

In this section, we develop another approach which 
allows us to show that the two identities found in 
\cite{KSO2019} exhaust all special identities of degree~4
and find the complete list of special identities of degree~5.
In the next section, we apply Gr\"obner basis technique 
for operads to show that the identities of degree 5 found in this section follow from the identities of degree~4.

Let $X=\{x_1,x_2,\dots \}$ be a countable set,
and let
$\GD\<X\>$ be the free GD-algebra generated by~$X$.
Suppose $B$ is a linear basis of $\GD\<X\>$, $X\subset B$, equipped with a linear order $\le $.
Note that finding the explicit form 
of $B$, or at least of the multilinear 
part of $B$, is a separate interesting problem.
Let us construct $U=\Pois\Der\<B,d\>/I$, where 
$I$ is the differential ideal generated by
\[
ad(b)-a\circ b,\quad a,b\in B,
\]
\[
\{ a,b\} - [a,b],\quad a,b\in B, \ a>b.
\]
The algebra $U$ constructed is the universal Poisson differential envelope of $\GD\<X\>$.
The kernel of the natural homomorphism
$\tau: \GD\<X\> \to U$, $\tau(b)= b+I$, 
is exactly the set of special identities of $\GD$-algebras.

Denote $B^{(\omega )} = \{b^{(n)} \mid b\in B, n\ge 0 \}$
and expand the order $\le $ to $B^{(\omega )}$ by the following rule:
\begin{equation}\label{eq:Order_diff}
b^{(n)}<a^{(m)} \iff (b,-n)<(a,-m)\ \text{lexicographically}.
\end{equation}
The ordering is motivated by the Shirshov's argument \cite{Shir62}
concerning the standard bracketing on Lyndon--Shirshov words.

Let us identify $\Pois\Der\<B,d\>$ with $\Pois\<B^{(\omega )}\>$ assuming $d^n(b)=b^{(n)}$, 
then $I$ coincides with the ideal in 
$\Pois\<B^{(\omega )}\>$
generated by
\begin{gather}
ab^{(n)}-(a\circ b)^{(n-1)}+\sum_{i=1}^{n-1} \binom{n-1}{i} a^{(i)} b^{(n-i)},  \ n\ge1,
                  \label{eq:IdealGenerators1}
\\
\{a,b^{(n)}\} -[a,b]^{(n)} + \sum\limits_{i=1}^{n} \binom{n}{i} \{a^{(i)},b^{(n-i)} \},\ a>b,\ n\ge 0.
                        \label{eq:IdealGenerators2}
\end{gather}
In order to find the kernel of $\tau $ 
it is enough to calculate the intersection 
of $I$ with $\Bbbk B^{(0)}$. 
The latter can be done if we know 
the Gr\"obner--Shirshov basis (GSB) of $I$ in the free 
Poisson algebra $\Pois\<B^{(\omega )}\>$ \cite{BCZ2019}.

Let us proceed as follows. Consider the Lie algebra 
$L= \Lie \<B^{(\omega )} \>/J$, where $J$ is the ideal 
generated by \eqref{eq:IdealGenerators2}. 
Then $\Pois\<B^{(\omega )}\>/I$
is  the same as 
the quotient of the symmetric algebra 
$S(L)$ modulo the ideal 
generated by all elements of the form
\begin{equation}\label{eq:IdealGen-Pois}
\{a_1^{(n_1)},\{a_2^{(n_2)}, \dots , \{a_k^{(n_k)}, f\}\dots \}\}, 
\end{equation}
for all  $f$ in \eqref{eq:IdealGenerators1},
$a_i\in B$, $n_i\ge 0$.
Thus it is enough to calculate the intersection 
of $L$ with the Gr\"obner basis of an ideal 
in $S(L)$ generated by relations \eqref{eq:IdealGen-Pois}.
Let us first find the Gr\"obner--Shirshov basis 
of the Lie algebra~$L$ in a slightly more general 
context.

\begin{lemma}\label{lem:DiffGSB}
Let $\mathfrak g$ be a Lie algebra with a linearly ordered
basis $B$. Then \eqref{eq:IdealGenerators2} is 
a Gr\"obner--Shirshov basis in 
$\Lie \langle B^{(\omega )}\rangle $ relative to the deg-lex ordering based on \eqref{eq:Order_diff}.
\end{lemma}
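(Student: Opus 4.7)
The plan is to invoke the Composition-Diamond lemma for the free Lie algebra $\Lie\langle B^{(\omega)}\rangle$: it suffices to show that every composition of the relations \eqref{eq:IdealGenerators2} reduces to zero modulo the relations themselves. First I determine the leading Lyndon--Shirshov word of each relation. Under \eqref{eq:Order_diff}, $a>b$ in $B$ implies $a^{(i)}>b^{(j)}$ for all $i,j\ge 0$, and $a=a^{(0)}>a^{(1)}>\cdots$; hence among the degree-two summands of \eqref{eq:IdealGenerators2} the bracket $\{a,b^{(n)}\}$ is strictly dominant, with leading word $a\cdot b^{(n)}$. The extra term $[a,b]^{(n)}$ is a linear combination of single letters $c^{(n)}\in B^{(\omega)}$ and thus of lower total degree.

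Next I enumerate possible compositions. All leading words have length two, so no inclusion compositions occur. For an intersection composition between two leading words $a\cdot b^{(n)}$ and $c\cdot d^{(m)}$, the suffix $b^{(n)}$ of the first must equal the prefix $c$ of the second; since prefixes always lie in $B\subset B^{(\omega)}$ (they are undifferentiated), this forces $n=0$ and $c=b$. Hence all compositions arise from pairs
\[
f_1=\{a,b\}-[a,b],\qquad f_2=\{b,d^{(m)}\}-[b,d]^{(m)}+\sum_{i=1}^{m}\binom{m}{i}\{b^{(i)},d^{(m-i)}\},
\]
with $a>b>d$ in $B$ and $m\ge 0$, formed at the word $a\cdot b\cdot d^{(m)}$.

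The composition then reads $\{f_1,d^{(m)}\}-\{a,f_2\}$ up to the standard LS-bracketing. The Jacobi identity $\{\{a,b\},d^{(m)}\}=\{a,\{b,d^{(m)}\}\}-\{b,\{a,d^{(m)}\}\}$ removes the only genuinely nontrivial free-Lie summand in degree three. Applying \eqref{eq:IdealGenerators2} to every surviving $\{x,y^{(k)}\}$ with $x>y$ — and using Jacobi once more to pull $a$ inside the brackets $\{a,\{b^{(i)},d^{(m-i)}\}\}$ arising in $\{a,f_2\}$ — rewrites the composition as a sum of single letters $c^{(k)}\in B^{(\omega)}$. The coefficient of each such letter should match a Leibniz-type expansion of $\bigl([a,[b,d]]-[b,[a,d]]-[[a,b],d]\bigr)^{(m)}$, which vanishes by Jacobi in $\mathfrak g$, while the binomial coefficients collapse through Vandermonde-type identities.

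The main obstacle is exactly this bookkeeping: keeping track of the order in which the nested brackets $\{a,\{b^{(i)},d^{(m-i)}\}\}$ are reduced, and confirming that the sums $\sum_{i+j=k}\binom{m}{i}\binom{m-i}{j}$ (and variants) rearrange so that only one differentiated Jacobi identity of $\mathfrak g$ survives. Once that combinatorial identity is verified, Shirshov's Composition-Diamond Lemma furnishes the claim.
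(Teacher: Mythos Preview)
Your approach---direct verification that every composition of \eqref{eq:IdealGenerators2} is trivial via the Composition--Diamond Lemma---is sound, and in fact the paper mentions it as the first option (``One may simply check that all compositions of intersection \ldots\ are trivial''). Your identification of the leading words $a\,b^{(n)}$ and of the only possible overlaps, namely $a\,b\,d^{(m)}$ with $a>b>d$ in $B$, is correct.

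However, the paper does not actually carry out this direct computation. Instead it gives a more conceptual argument that sidesteps precisely the bookkeeping you flag as ``the main obstacle.'' The trick is to adjoin a single new letter $t<B$ and form
\[
U=\Lie\langle B,t\mid \{a,b\}-[a,b],\ a>b\rangle.
\]
Since a multiplication table is always a Gr\"obner--Shirshov basis, the reduced Lyndon--Shirshov words in $U$ are known immediately: they are words in $B\cup\{t\}$ containing no subword $ab$ with $a>b$. Identifying $a^{(k)}$ with the iterated bracket $\{\cdots\{a,t\},\ldots,t\}$, Shirshov's standard-bracketing argument shows that the reduced words in $U$ (other than $t$ itself) coincide with the words reduced modulo \eqref{eq:IdealGenerators2} under the ordering \eqref{eq:Order_diff}. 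Hence \eqref{eq:IdealGenerators2} is already a GSB, and $U\simeq \Bbbk t\ltimes L$.

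What each approach buys: your direct route is self-contained but requires checking the binomial/Vandermonde cancellations you describe (which you have not actually written out, so as it stands the proposal is a correct plan rather than a complete proof). The paper's $t$-trick trades that computation for a one-line appeal to the classical fact that multiplication tables are GSBs, and makes the isomorphism $U\simeq \Bbbk t\ltimes L$ transparent---a structural statement your approach would recover only a posteriori.
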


\begin{remark}
Given a Lie algebra $\mathfrak g$ with a linear 
basis $B$ as above, 
the Lie algebra generated by $B^{(\omega )}$ with 
defining relations \eqref{eq:IdealGenerators2} is 
the universal differential envelope of~$\mathfrak g$.
\end{remark}

\begin{proof}
One may simply check that all compositions of intersection 
of the relations \eqref{eq:IdealGenerators2} are trivial 
in the sense of \cite{Shir62} (see also \cite{BokChen14}).
Note that a specific technique for calculating 
Gr\"obner--Shirshov bases in differential Lie algebras 
was proposed in \cite{CCLiu2009}, but we have a different ordering.

A more conceptual way is based on the following observation. 
Denote by $L$ the quotient of 
$\Lie \langle B^{(\omega )}\rangle $ 
by the Lie ideal generated by \eqref{eq:IdealGenerators2}.
The multiplication table on $\mathfrak g$ corresponds to $n=0$.
Add a new letter $t$ to the alphabet $B$ (assuming $t<B$) and construct 
\[
U = \Lie \<B,t \mid \{a,b\} - [a,b], \ a>b \>.
\]
Since the multiplication table is always a Gr\"obner--Shirshov basis, the linear basis of $U$ consists of all Lyndon--Shirshov 
words $[u]$ in $B\cup \{t\}$ such that their associative images 
$u$ do not contain subwords $ab$ for $a>b$.
Such a word $u$ is either equal to $t$ or may be written as 
\[
u = u_1a_1t^{k_1}\dots u_2a_2t^{k_2} \dots u_ma_mt^{k_m},
\]
for $u_i\in B^*$, $a_i\in B$, $k_i\ge 1$.
As shown in \cite{Shir62}, the standard bracketing $[u]$ on $u$ 
is constructed in the same way as on 
\[
u_1a_1^{(k_1)}\dots u_2a_2^{(k_2)}\dots u_ma_m^{(k_m)}, 
\]
where $a^{(k)} = \{\{\dots \{a,t\},\dots , t\}, t \}$
and the order is defined by \eqref{eq:Order_diff}.
Hence, the linear basis of $U$ consists of $t$ and of all those 
Lyndon--Shirshov words that are reduced modulo \eqref{eq:IdealGenerators2}.
Therefore, the latter relations form a Gr\"obner--Shirshov basis 
in $\Lie \<B^{(\omega )} \>$, and, in particular, 
$U \simeq  \Bbbk t \ltimes L$,
where $\{f,t\} = f'$ for $f\in L$.
\end{proof}

\begin{corollary}\label{cor:LBasis}
The linear basis of $L$ consists of all nonassociative Lyndon--Shirshov words of the form 
\[
[x_{11}\dots x_{1l_1}a_1^{(k_1)} \dots x_{m1}\dots x_{ml_m} a_m^{(k_m)}],
\quad k_i\ge 1,\ x_{i1}\le \dots \le x_{il_i}\le a_{i}.
\]
\end{corollary}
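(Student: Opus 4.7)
The plan is to deduce the corollary directly from Lemma~\ref{lem:DiffGSB} via the Composition--Diamond (Shirshov) lemma for Lie algebras: since the relations \eqref{eq:IdealGenerators2} form a Gröbner--Shirshov basis of the defining ideal of $L$, a linear basis of $L$ is given by the nonassociative Lyndon--Shirshov words $[w]$ in $B^{(\omega)}$ whose associative images $w$ avoid, as a subword, the associative leading monomial of any relation in the basis.

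The first step is to identify these leading terms. In the ordering \eqref{eq:Order_diff} one has $a^{(i)}<a^{(0)}=a$ for every $i\ge 1$ (since $-i<0$ in the lexicographic comparison). Hence among the summands of \eqref{eq:IdealGenerators2} with $a>b$ in $B$ and $n\ge 0$, the term $\{a,b^{(n)}\}$ dominates and carries the associative leading monomial $ab^{(n)}$, which also beats the degree-one summand $[a,b]^{(n)}$ in deg-lex. Consequently a Lyndon--Shirshov word $w$ on $B^{(\omega)}$ is reduced iff, whenever a letter of $B$ (equivalently, a ``bare'' letter $b^{(0)}$) is immediately followed by a further letter, that further letter has underlying base in $B$ at least as large as itself.

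It remains to translate this characterisation into the combinatorial shape asserted. Splitting any reduced $w$ at the positions of the ``derived'' letters $a^{(k)}$, $k\ge 1$, yields a factorisation
\[
w=x_{11}\dots x_{1l_1}\,a_1^{(k_1)}\,x_{21}\dots x_{2l_2}\,a_2^{(k_2)}\,\dots\,x_{m1}\dots x_{ml_m}\,a_m^{(k_m)},
\]
with $x_{ij},a_i\in B$ and $k_i\ge 1$, and the reducedness condition is equivalent to the weak monotonicity $x_{i1}\le\dots\le x_{il_i}\le a_i$ inside each block. The one point requiring actual verification---and the main obstacle in the argument---is that no condition arises at a junction $a_i^{(k_i)}x_{i+1,1}$; this holds because the forbidden pattern $ab^{(n)}$ requires a bare first letter, while $a_i^{(k_i)}$ is derived. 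Precisely this is what the ordering \eqref{eq:Order_diff} was engineered to ensure. Applying Shirshov's standard bracketing then upgrades each associative $w$ obtained this way to the nonassociative Lyndon--Shirshov word $[w]$ displayed in the statement, with the degenerate cases $x\in B$ and $b^{(k)}$, $k\ge 1$, appearing as the shortest instances of the same combinatorial pattern.
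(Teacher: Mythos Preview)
Your approach via the Composition--Diamond lemma is exactly the intended one, and your identification of the leading monomials $ab^{(n)}$ (for $a>b$ in $B$, $n\ge 0$) of the relations \eqref{eq:IdealGenerators2} is correct. The gap is in the final step. You assert that splitting a reduced Lyndon--Shirshov word at its derived letters always yields a factorisation terminating in some $a_m^{(k_m)}$ with $k_m\ge 1$, but this is neither argued nor true. For $a>b$ in $B$ consider $w=a^{(1)}b$. In the order \eqref{eq:Order_diff} one has $a^{(1)}>b$, so $w$ is a Lyndon--Shirshov word; its only consecutive pair begins with the derived letter $a^{(1)}$, so $w$ contains no forbidden factor $c\,d^{(n)}$ (which requires a bare first letter $c\in B$). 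Hence $w$ is a reduced Lyndon--Shirshov word that ends in a bare letter and does not fit the displayed pattern. Your own observation that ``no condition arises at a junction $a_i^{(k_i)}x_{i+1,1}$'' is precisely what allows such tails to occur.

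The correct description of the reduced words must therefore allow an additional trailing non-decreasing run $x_{m+1,1}\le\dots\le x_{m+1,l_{m+1}}$ of bare letters after $a_m^{(k_m)}$ (together with the single bare letters you already flag as degenerate). This same omission is present in the paper's statement of the corollary and in the sentence of the proof of Lemma~\ref{lem:DiffGSB} asserting that every reduced word in $B\cup\{t\}$ other than $t$ ends in a power of $t$; your argument reproduces that imprecision rather than repairing it.
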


In order to find the intersection of $L$ with the ideal in $S(L)$
generated by \eqref{eq:IdealGen-Pois} let us define a rewriting system in $S(\Lie \<B^{(\omega )}\>)$ based on the relations \eqref{eq:IdealGenerators2}
with principal parts $\{a,b^{(n)}\}$, $a>b$, 
and on the relations
\eqref{eq:IdealGen-Pois}
by choosing the principal parts 
as 
\[
a LS(\{a_1^{(n_1)},\{a_2^{(n_2)}, \dots , \{a_k^{(n_k)}, b^{(n)}\}\dots \}\}), \quad n_i\ge 0, \ n\ge 1,
\]
where $LS(u)$ stands for the principal Lyndon--Shirshov word 
in the expansion of $u\in \Lie \<B^{(\omega )}\>$.

For example, if 
$f = ac'-a\circ c$, 
$b> c$,
then the relation
$\{b,f\} = \{b,ac'\}-\{b,a\circ c \}$ from 
\eqref{eq:IdealGen-Pois}
gives rise to the following rewriting rule:
\[
a\{b,c'\} \to [a,b]\circ c + [b,a\circ c].
\]
Similarly, $\{c,ab'-a\circ b\}$ gives us 
\[
a\{c,b'\} \to [a,c]\circ b + [c,a\circ b].
\]
On the other hand, we have a rule 
\[
\{b,c'\} \to \{c,b'\} + [b,c]'.
\]
from \eqref{eq:IdealGenerators2}.

Here we have an ambiguity in $a\{b,c'\}$: there are
two ways how to rewrite it.
This particular critical pair 
is convergent by \eqref{eq:GD1}:
\[
a\{b,c'\} \to a\{c,b'\} + a[b,c]'\to 
[a,c]\circ b + [c,a\circ b] + a\circ [b,c] = 
[a,b]\circ c + [b,a\circ c].
\]

Denote by $\mathcal G(B)$ the oriented graph 
with vertices $S(\Lie \<B^{(\omega )}\>)$ and 
edges defined by the rewriting rules based 
on \eqref{eq:IdealGenerators2} and \eqref{eq:IdealGen-Pois}.
These rules preserve the weight ($\wt$) of differential
polynomials in $B^{(\omega )}$ as well as the degree in~$X$.
The set $\Bbbk B$ is homogeneous of weight $-1$. 
Hence, $\mathcal G(B)$ splits into connected components $\mathcal G_{n,w}$ that contain vertices of degree $n$ in $X$ of weight~$w$.
Denote $\mathcal G_{n,-1}$ by $\mathcal G_n(B)$.
This graph has no infinite chains (i.e., it is a rewriting system):
as shown in \cite{KSO2019}, 
these rules applied to a differential 
monomial of weight $-1$ rewrite it to an element of 
$\Bbbk B$ in a finite number of steps.
This observation allows us
not to define any ordering on commutative monomials in 
$S(\Lie \<B^{(\omega )}\>)$.

In order to find the special identities 
of degree $n$ (with respect to $X$) it is enough to 
find the relations in $\Bbbk B$ that make the rewriting 
system $\mathcal G_{n}(B)$ confluent.

Given a fixed integer $n\ge 3$, 
it is enough to consider all multilinear differential
 monomials of weight $-1$ and of degree $n$
in $X$, and expand all critical pairs 
in the rewriting system $\mathcal G_n(B)$.
Note that all such pairs without brackets (i.e., those from $\Com \<B^{(\omega )}\rangle $) are confluent
by \cite{BCZ2017} and we do not need to consider them. Similarly, 
Lemma~\ref{lem:DiffGSB} shows that all ``pure Lie'' critical pairs
$f \leftarrow u\to g$, $u\in \Lie \<B^{(\omega )} \>$,
are trivial.

For $n=3$, the remaining critical pairs 
correspond to the ambiguities of the form 
$a\{b,c'\}$, $b>c$. 
As shown above, these critical pairs are convergent.

For $n=4$, there are five potential ambiguities:
\begin{itemize}
    \item[(A1)] $a\{b,\{c,d'\}\}$, $c>d$;
    \item[(A2)] $a\{\{b,c'\},d\}$, $c\le d$, $b>c$; 
    \item[(A3)] $ab'\{c,d'\}$, $c\ge d$;
    \item[(A4)] $ab\{c',d'\}$, $c>d$;
    \item[(A5)] $ab \{c,d''\}$, $c\ge d$.
\end{itemize}

In the case (A1), one may apply either
$\{c,d'\} \to \{c',d\} + [c,d]'$
or the rule coming from 
$\{b,\{c, ad'-a\circ d \}\}$ in \eqref{eq:IdealGen-Pois}:
\[
a\{b,\{c,d'\}\} \to f_1= a\{b,\{c',d\}\} + a\{b,[c,d]'\}
\]
or
\[
a\{b,\{c,d'\}\} \to f_2 = \{a,b\}\{c,d'\} + \{a,c\}\{b,d'\} - [b,[c,a]]d' +\{b,\{c,a\circ d\}\}.
\]
Then apply the rewriting rules 
coming from \eqref{eq:IdealGen-Pois} to get
\[
f_2\to [c, [a,b]\circ d] - [c,[a,b]]\circ d
 + [b,[a,c]\circ d] +[b,[c,a\circ d]]
\]
and, similarly,
\[
f_1\to -[d, [a,b]\circ c] + [d,[a,b]]\circ c
 - [b,[a,d]\circ c] - [b,[d,a\circ c]] +[b, a\circ [c,d]] 
 - [b,a]\circ [c,d]
\]
One may see that 
$f_1=f_2$ due to \eqref{eq:GD1}, namely, 
$f_2-f_1$ is zero modulo the Gel'fand--Dorfman relations 
at $(a,c,d)$ and $(c,[a,b], d)$.

The ambiguity (A2) also gives rise to a 
convergent critical pair modulo
\eqref{eq:GD1}, as one may
check in a similar way.

In the case (A3), we have three possible rewriting rules:
\[
\begin{aligned}
\{c,d'\} &{}\to \{c',d\} + [c,d]',\quad \text{if } c>d, \\
a\{c,d'\} &{}\to [c, a\circ d] + [a,c]\circ d, \\
ab' &{} \to a\circ b.
\end{aligned}
\]
The first one, combined with either of other rules, leads 
to a convergent critical pair due to \eqref{eq:GD1}.
Consider the pair coming from the last two rules:
\[
\begin{gathered}
ab'\{c,d'\} \to [c,a\circ d]\circ b + ([a,c]\circ d)\circ b, \\
ab'\{c,d'\} \to (a\circ b)\{c,d'\} \to [c,(a\circ b)\circ d] - [c,a\circ b]\circ d.
\end{gathered}
\]
The relation obtained
\begin{equation}\label{eq:Spec1}
[c,a\circ d]\circ b + ([a,c]\circ d)\circ b = 
 [c,(a\circ b)\circ d] - [c,a\circ b]\circ d
\end{equation}
is one of the special identities found in \cite{KSO2019}.

For (A4), it is enough to consider the following rules:
\[
\begin{aligned}
a\{c',d'\} &{} \to \{a,c'\}d' + \{c',a\circ d\}, \\
b\{c',d'\} &{}\to \{b,c'\}d' + \{c',b\circ d\}, \\
b\{c',d'\} &{}\to -\{b,d'\}c' - \{d',b\circ c\}.
\end{aligned}
\]
For example, the second rule allows us to rewrite 
$ab\{c',d'\}$ as 
\[
\{b, (a\circ d)\circ c \} - \{b, a\circ d\}\circ c + \{a\circ c, b\circ d\} -\{a,b\circ d\}\circ c.
\]
Now use \eqref{eq:Spec1} to replace the first term with 
$[b, (a\circ c)]\circ d - ([b,a]\circ c)\circ d + [b, a\circ d]\circ c$. As a result, we obtain 
\begin{equation}\label{eq:A4case}
ab\{c',d'\} \to 
[b,a\circ c]\circ d - ([b,a]\circ c)\circ d + [a\circ c, b\circ d]
- [a,b\circ d]\circ c
\end{equation}

The convergence of the corresponding three critical pairs starting 
at $ab\{c',d'\}$ is equivalent to the conditions that 
the right-hand side of \eqref{eq:A4case}
is symmetric with respect to the permutation $(a,b)$ and skew-symmetric by $(c,d)$.

Either of these two conditions (due to \eqref{eq:RComm} and anti-commutativity of $[\cdot,\cdot ]$) 
gives rise to the following relation:
\begin{multline}\label{eq:Spec2}
2([a,b]\circ c)\circ d 
=
[b\circ c, a\circ d] - [a\circ c, b\circ d] \\
+ ([a,b\circ c] -[b,a\circ c])\circ d  
+ ([a,b\circ d] - [b,a\circ d])\circ c.
\end{multline}
This is another special identity found in \cite{KSO2019}.

In the case (A5), we have two types of critical pairs:
the first one coming from the rules based on 
$\{c,bd''-(b\circ d)' + b'd'\}$ and $\{c,ad''-(a\circ d)' + a'd'\}$,
the second one (for $c>d$) coming from either of the rules above 
and $\{c,d''\} + 2\{c',d'\} + \{c'',d\} - [c,d]''$ in
\eqref{eq:IdealGenerators2}.

For example,
\[
ab\{c,d''\} \to f_1 = a\{c,(b\circ d)'\} - a\{c,b'\}d' -ab'\{c,d'\} - a[c,b]d''.
\]
The polynomial $f_1$ rewrites in $\mathcal G_4(B)$ as follows:
\[
f_1 \to ([c,a], b,d) - [c, (a,b,d)] + (a,[c,b], d).
\]
Here $(x,y,z)$ stands for $(x\circ y)\circ z - x\circ (y\circ z)$.
The expression obtained is symmetric with respect to $(a,b)$
by \eqref{eq:LSymm}
which means the convergence of the first type critical pair.

The second type critical pairs based on (A5) appear
when we rewrite 
\[
ab[c,d'']\to f_2 = ab\{d,c''\} - 2ab\{c',d'\} + ab[c,d]''.
\]
We already know how to rewrite all terms in the right-hand side to get an element in $\Bbbk B$. 
Comparing the result with what is obtained from $f_1$
we obtain a relation which is a corollary of 
\eqref{eq:GD1}, \eqref{eq:Spec1}, and \eqref{eq:Spec2}.
(Note that \eqref{eq:GD1} and \eqref{eq:Spec1}
allow us to rewrite both summands in $[c,(a,b,d)]$
via shorter terms, like those in \eqref{eq:Spec2}.)

In \cite{KSO2019} it was proved that 
\eqref{eq:Spec1} and \eqref{eq:Spec2} are independent identities 
on a GD-algebra.  Now we may state the following

\begin{proposition}
All special identities of $\GD$-algebras of degree  $\le 4$ are
consequences of (\ref{eq:Spec1}) and (\ref{eq:Spec2}).
\end{proposition}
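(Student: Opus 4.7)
The plan is to read the proposition as the assertion that the rewriting system $\mathcal{G}_n(B)$ constructed above is confluent in degrees $n\le 4$ after adjoining the reductions corresponding to \eqref{eq:Spec1} and \eqref{eq:Spec2}. Indeed, by the setup of this section, the kernel of $\tau\colon \GD\langle X\rangle\to U$ in multilinear degree $n$ coincides with the span of the differences between the distinct reductions at the critical pairs of $\mathcal{G}_n(B)$ on monomials of weight $-1$. Since $\mathcal{G}_n(B)$ terminates in $\Bbbk B$, it suffices to exhaust these critical pairs up to degree $4$ and verify that each either converges modulo \eqref{eq:GD1} or yields a relation equivalent modulo \eqref{eq:GD1} to \eqref{eq:Spec1} or \eqref{eq:Spec2}.

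For $n=2$ there are no multilinear critical pairs, since each rewriting rule of $\mathcal{G}_2(B)$ acts on a distinct monomial shape. For $n=3$, the only multilinear critical pairs are of the form $a\{b,c'\}$ with $b>c$; the calculation performed above shows that these are convergent modulo \eqref{eq:GD1}, so no special identities of degree $3$ arise. This matches the expected picture.

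The bulk of the proof is the degree-$4$ analysis, and I would simply invoke the exhaustive enumeration already performed: every multilinear monomial of weight $-1$ and degree $4$ admitting two distinct rewriting rules at a common subterm falls into one of the five classes (A1)--(A5). For (A1) and (A2) the two reductions coincide after applying two instances of \eqref{eq:GD1}; (A3) produces exactly \eqref{eq:Spec1}; the symmetry-antisymmetry obstructions arising from (A4) produce exactly \eqref{eq:Spec2} (once \eqref{eq:Spec1} has been adjoined); and the two sub-cases of (A5) reduce to the right-hand sides already handled modulo \eqref{eq:GD1}, \eqref{eq:Spec1}, and \eqref{eq:Spec2}. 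Since \eqref{eq:Spec1} and \eqref{eq:Spec2} were shown in \cite{KSO2019} to be independent, no further identity of degree $\le 4$ is needed and none of them is redundant.

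The principal obstacle is verifying that the list (A1)--(A5) is indeed exhaustive. The rewriting rules come from three sources: the bracket rules \eqref{eq:IdealGenerators2}, the envelope rules \eqref{eq:IdealGenerators1}, and the $S(L)$-rules \eqref{eq:IdealGen-Pois} obtained from the latter by iterated bracketing. Enumerating unordered pairs of such rules that share a common differential Poisson subterm inside a weight $-1$ monomial of total degree $4$ in $X$ is a finite but delicate bookkeeping task; once this combinatorial step is done, the verification of each case is a mechanical calculation using only the Novikov identities \eqref{eq:LSymm}, \eqref{eq:RComm}, the Gel'fand--Dorfman identity \eqref{eq:GD1}, and the Jacobi and Leibniz identities built into the rewriting rules.
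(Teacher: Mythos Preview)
Your proposal is correct and follows essentially the same approach as the paper: the proposition is stated immediately after the case-by-case analysis (A1)--(A5), and the paper's ``proof'' is precisely that preceding analysis together with the independence result from \cite{KSO2019}, which you recapitulate accurately. Your remark that the exhaustiveness of the list (A1)--(A5) is the main bookkeeping burden is fair; the paper handles it only implicitly, by noting that pure commutative ambiguities are confluent by \cite{BCZ2017} and pure Lie ones by Lemma~\ref{lem:DiffGSB}, leaving only the mixed weight~$-1$ monomials you describe.
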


In a similar way, we may find the complete list of special identities 
of degree 5.
The list of ambiguities in $\mathcal G_5(B)$ is relatively long since we have to consider all Poisson differential monomials
of degree 5 and weight~$-1$:
\[
\begin{gathered}{}
[a,[b,[c,d']]]e,\ [a,[b,c'']]de,\ 
[a,[b',c']]de,\ [a',[b,c']]de,\ 
[a,[b,c']]de',\ [a,b'][c,d']e, \\
[a,b^{(3)}]cde,\ [a',b'']cde,\ [a,b'']cde',\ [a',b']cde',\ [a,b']cde'',\ [a,b']cd'e'.
\end{gathered}
\]
As a result of the same computations as for $n=4$, we obtain 
three special identities of degree~5:
\begin{multline}\label{eq:Spec4}
[d\circ a,[b,e\circ c]]=[e\circ a,[b,d\circ c]]+[d,[b,e\circ c]\circ
a]-[d,[b,e]\circ a]\circ c+([d,[b,e]]\circ c)\circ a  \\
-[d,[b,e\circ c]]\circ a-[e\circ a,[b,d]\circ c]-[e,[b,d\circ c]]\circ 
a+[e,[b,d]\circ c]\circ a+[d\circ a,[b,e]\circ c] \\
+[d,[b,e\circ c]]\circ a-[d,[b,e]\circ c]\circ a-[e,[b,d\circ c]\circ 
a]+[e,[b,d]\circ a]\circ c-([e,[b,d]]\circ c)\circ a \\
+[e,[b,d\circ c]]\circ a,
\end{multline}
\begin{multline}\label{eq:Spec3}
[c,[a,e\circ b]\circ d]=[a,[c,e\circ d]\circ b]-[a,[c,e\circ d]]\circ 
b-[a,[c,e]\circ b]\circ d+([a,[c,e]]\circ d)\circ b \\
+[c,[a,e]\circ d]\circ b+[c,[a,e\circ b]]\circ d-([c,[a,e]]\circ d)\circ b,
\end{multline}
and 
\begin{multline}\label{eq:Spec5}
[a,d\circ b]\circ(c\circ e)=[a,c\circ b]\circ(d\circ e)+([a,d\circ b]\circ 
c)\circ e+([a,d]\circ(c\circ e))\circ b-(([a,d]\circ c)\circ e)\circ b \\
-([a,c\circ b]\circ d)\circ e-([a,c]\circ(d\circ e))\circ b+(([a,c]\circ d)\circ 
e)\circ b.
\end{multline}
Other critical pairs are convergent modulo  \eqref{eq:Spec1}, \eqref{eq:Spec2}, \eqref{eq:Spec4}, \eqref{eq:Spec3}, and \eqref{eq:Spec5}.
We do not state the details here since in the next section 
we show that in fact all special identities of degree $\le 5$
are corollaries of \eqref{eq:Spec1} and \eqref{eq:Spec2}.

\section{On the Gr\"obner basis of the Gelfand--Dorfman operad with special identities}\label{programm}

Let us recall the basic definitions related with operads following \cite[Chapter~5]{LodayVallette}.
A (symmetric) operad $\mathcal P$ in the category $\Vecc_\Bbbk $ of linear 
spaces over a field $\Bbbk $ is
a collection of spaces $\mathcal P(n)$, $n\geq 1$, 
equipped with linear composition maps
\[
\gamma^m _{n_1,\dots,n_m} :\mathcal P(m)\otimes \mathcal P(n_1)\otimes\ldots 
\otimes \mathcal P(n_m)\rightarrow \mathcal P(n_1+\ldots +n_m)
\]
for all integers $m,n_1,\ldots,n_m\geq 1$, 
each $\mathcal P(n)$ is a module over the symmetric group $S_n$.
These data have to satisfy the following conditions:
the composition is associative and equivariant relative to the 
action of $S_n$; the space $\mathcal P(1)$ contains an element $1$
which acts as an identity relative to the composition.

Every operad may be considered as an image of an 
appropriate free operad, i.e., a quotient 
modulo an operad ideal.

Namely, for every graded space 
$V = \bigoplus_{n\ge 1} V(n)$ there exists a uniquely 
defined free operad $\mathcal F(V)$ generated by $V$.
An operad ideal of $\mathcal F(V)$
may be presented as a minimal one that contains a given series of elements from
$\bigcup\limits_{n\geq 1} \mathcal F(V)(n)$.
Therefore, an operad may be defined by generators and relations.

For example, the operad $\Lie $ 
governing the variety of Lie algebras
is generated by $V_1=V_1(2)$, where $\dim V_1(2)=1$, 
this is a skew-symmetric $S_2$-module.
The free operad $\mathcal F(V_1)$ is exactly the operad 
of anti-commutative algebras. The set of defining relations 
of $\Lie $ consists of the Jacobi identity:
\[
\gamma^2_{2,1} (\mu,\mu,1) + \gamma^2_{2,1} (\mu,\mu,1)^{(123)} + \gamma^2_{2,1} 
(\mu,\mu,1)^{(132)}.
\]

The operad $\Nov $ of the variety of Novikov algebras 
is generated by $V_2 = V_2(2)$, $\dim V_2=2$, this is the 
regular $S_2$-module. Namely, a basis of 
$V_2(2)$ consists of $\nu$, $\nu^{(12)}$.
The free operad $\mathcal F(V_2)$ is exactly the magmatic one.
The defining relations of $\Nov $ include left symmetry and right
commutativity:
\[
\begin{gathered}
\gamma^2_{2,1} (\nu,\nu,1) - \gamma^2_{2,1} (\nu,\nu,1)^{(12)} - 
\gamma^{2}_{1,2} (\nu,1,\nu) 
+  \gamma^{2}_{1,2} (\nu,1,\nu)^{(12)}, \\
\gamma ^2_{1,2}(\nu,1,\nu) -  \gamma^2_{1,2} (\nu,1,\nu)^{(23)}.
\end{gathered}
\]

There is an intermediate notion between nonsymmetric and symmetric operads, known as a {\em shuffle operad}.
By definition, a shuffle operad $\mathcal V$ is a collection of spaces $\mathcal V(n)$, $n\ge 1$, equipped with 
a collection of compositions 
\[
 \gamma_\pi : \mathcal V(m)\otimes \mathcal V(n_1)\otimes\ldots 
\otimes \mathcal V(n_m)\rightarrow \mathcal V(n_1+\ldots +n_m)
\]
where 
$\pi $ is a {\em shuffle partition} of the set $\{1,\dots , n\}$, 
$n=m_1+\dots + m_n$, into $m$ disjoint subsets $I_j$, $j=1,\dots, m$, 
such that $\min I_1<\min I_2<\dots <\min I_m$.
Thus, a shuffle operad has no symmetric module structure, but its composition
structure still carries some information about the order of arguments.

Shuffle operads provide a convenient framework for the computation of 
Gr\"obner bases and normal forms 
in an operad defined by generators and relations \cite{DotKhor2010}. 
There is a forgetful functor $\mathcal P\mapsto \mathcal P^f$
that turns a symmetric operad $\mathcal P$ into a 
a shuffle one \cite[Section 5.3]{bremner-dotsenko} such that 
the normal form of elements in $\mathcal P^f(n)$ allows us to recover 
the normal form in $\mathcal P(n)$. In particular, we may find 
the dimensions of  $\mathcal P(n)$ in this way.

Following \cite{bremner-dotsenko}, we  may convert the symmetric operad $\GD$ into 
a shuffle operad $\GD^f$, a homomorphic image of the shuffle tree operad $\mathcal{T}_\Sha(\mathcal{X})$
for an appropriate language $\mathcal X$. 

Let us replace the operations $\nu $ and $\mu $ with the set of three operations 
$\mathcal X=\{x,y,z\}$, where $x$ and $y$ represent $\nu $ and $\nu^{(12)}$, 
$z$ represents~$\mu $. We will use the natural notation $x(1\;2) = \nu (x_1,x_2)$, 
$y(1\;2)=\nu(x_2,x_1)$, etc. 
To convert the defining relations of the operad $\GD$
into elements of
$\mathcal{T}_\Sha(\mathcal{X})$
we may use the equivariance property of the composition (see \cite{bremner-dotsenko} for details).
As a result, \eqref{eq:LSymm} and \eqref{eq:RComm} turn into the following relations for 
$x$ and $y$:
\[
\begin{gathered}
x(x(1\;2)\;3) - x(1\; x(2\; 3)) - x(y(1\; 2)\; 3) + y(x(1\; 3)\; 2), \\
x(x(1\; 3)\; 2) - x(1\; y(2\; 3)) - x(y(1\; 3)\; 2) + y(x(1\; 2)\; 3),\\
y(1\; x(2\; 3)) - y(y(1\; 3)\; 2) - y(1\; y(2\; 3)) + y(y(1\; 2)\; 3),\\
x(x(1\; 2)\; 3) - x(x(1\; 3)\; 2),\\
x(y(1\; 2)\; 3) - y(1\; x(2\; 3)),\\
x(y(1\; 3)\; 2) - y(1\; y(2\; 3)).
\end{gathered}
\]
The Jacobi identity may be expressed in terms of $z$ 
as
\[
z(z(1\; 2)\; 3) - z(1\; z(2\; 3)) - z(z(1\; 3)\; 2).
\]
Similarly, converting \eqref{eq:GD1} (namely, all relations in the symmetric group orbit of the 
defining relation), we obtain 
\[
\begin{gathered}
z(1\; x(2\; 3)) + z(y(1\; 2)\; 3) - x(z(1\; 2)\; 3) - y(1\; z(2\; 3)) - 
y(z(1\; 3)\; 2), \\
-z(x(1\; 3)\; 2) + z(x(1\; 2)\; 3) + x(z(1\; 2)\; 3) - x(z(1\; 3)\; 2) - x(1\; 
z(2\; 3)), \\
-y(z(1\; 2)\; 3) + z(1\; y(2\; 3)) + z(y(1\; 3)\; 2) - x(z(1\; 3)\; 2) + y(1\; 
z(2\; 3)).
\end{gathered}
\]
The elements ({\em shuffle tree polynomials} \cite{bremner-dotsenko}) obtained generate the ideal of defining relations for the operad 
$\GD^f$, a quotient of $\mathcal{T}_{\Sha}(\mathcal{X})$.

Calculating the Gr\"obner base of $\GD^f$ by means of the package \cite{DotsHij}, 
we get the following result for $\dim \GD(n)=\dim \GD^f(n)$.
\begin{center}
\begin{tabular}{c|cccccc}
 $n$ & 1 & 2 & 3 & 4 & 5 & 6 \\
 \hline 
 $\dim(\GD(n)) $ & 1 & 3 & 17 & 140 & 1524 & 20699
\end{tabular}
\end{center}

The first five terms of the sequence coincide with the number of certain planar graphs (see OEIS A322137, A291842). However, the sixth one is different. 
Finding a linear basis of the free $\GD$-algebra, or at least the sequence $\GD(n)$
is an interesting open problem. Note that the operad $\GD $ is not Koszul since so is $\Nov$ \cite{dzhuma1}.

By Corollary \ref{cor:SGD-Var}, the class $\SGD$ is defined by identities.
There should exist identities separating $\SGD$ from $\GD$, i.e., independent special identities of $\GD$-algebras. 
Let us consider the class of $\GD$-algebras with additional identities (\ref{eq:Spec1}) and 
(\ref{eq:Spec2}). Denote this class by $\wSGD$ (weak special Gelfand--Dorfman algebra).

As above, we may convert the defining relations of $\wSGD$ into shuffle tree polynomials  
by adding the orbits of relations (\ref{eq:Spec1}) and (\ref{eq:Spec2}) to $\GD^f$.
Namely, (\ref{eq:Spec1}) and (\ref{eq:Spec2}) give rise to the following elements of
$\mathcal{T}_{\Sha}(\mathcal{X})$:
\[
\begin{gathered}
z(1\; x(x(2\; 3)\; 4)) - x(z(1\; x(2\; 3))\; 4) - x(z(1\; x(2\; 4))\; 3) + 
x(x(z(1\; 2)\; 3)\; 4), \\
z(1\; x(y(2\; 3)\; 4)) - x(z(1\; y(2\; 3))\; 4) - x(z(1\; x(3\; 4))\; 2) + 
x(x(z(1\; 3)\; 2)\; 4), \\
z(1\; y(2\; y(3\; 4))) - x(z(1\; y(3\; 4))\; 2) - x(z(1\; y(2\; 4))\; 3) + 
x(x(z(1\; 4)\; 2)\; 3),\\
-z(x(x(1\; 3)\; 4)\; 2) + x(z(x(1\; 3)\; 2)\; 4) + x(z(x(1\; 4)\; 2)\; 3) - x(x(z(1\; 2)\; 3)\; 4), \\
-z(x(y(1\; 3)\; 4)\; 2) + x(z(y(1\; 3)\; 2)\; 4) - y(1\; z(2\; x(3\; 4))) + x(y(1\; z(2\; 3))\; 4), \\
-z(x(y(1\; 4)\; 3)\; 2) + x(z(y(1\; 4)\; 2)\; 3) - y(1\; z(2\; y(3\; 4))) + x(y(1\; z(2\; 4))\; 3), \\
-z(x(x(1\; 2)\; 4)\; 3) + x(z(x(1\; 2)\; 3)\; 4) + x(z(x(1\; 4)\; 3)\; 2) - x(x(z(1\; 3)\; 2)\; 4), \\
-z(x(y(1\; 2)\; 4)\; 3) + x(z(y(1\; 2)\; 3)\; 4) + y(1\; z(x(2\; 4)\; 3)) - y(1\; x(z(2\; 3)\; 4)), \\
-z(x(y(1\; 4)\; 2)\; 3) + x(z(y(1\; 4)\; 3)\; 2) + y(1\; z(y(2\; 4)\; 3)) + y(1\; y(2\; z(3\; 4))), \\
-z(x(x(1\; 2)\; 3)\; 4) + x(z(x(1\; 2)\; 4)\; 3) + x(z(x(1\; 3)\; 4)\; 2) - x(x(z(1\; 4)\; 2)\; 3), \\
-z(x(y(1\; 2)\; 3)\; 4) + x(z(y(1\; 2)\; 4)\; 3) + y(1\; z(x(2\; 3)\; 4)) - x(y(1\; z(2\; 4))\; 3), \\
-z(x(y(1\; 3)\; 2)\; 4) + x(z(y(1\; 3)\; 4)\; 2) + y(1\; z(y(2\; 3)\; 4)) - x(y(1\; z(3\; 4))\; 2)
\end{gathered}
\]
and
\begin{multline*}
z(x(1\; 2)\; x(3\; 4)) - x(z(x(1\; 2)\; 3)\; 4) - x(z(1\; x(3\; 4))\; 2) + 2 x(x(z(1\; 3)\; 2)\; 4) \\
+ z(x(1\; 4)\; y(2\; 3)) - x(z(1\; y(2\; 3))\; 4) - x(z(x(1\; 4)\; 3)\; 2),
\end{multline*}
\begin{multline*}
z(x(1\; 3)\; x(2\; 4)) - x(z(1\; x(2\; 4))\; 3) - x(z(x(1\; 3)\; 2)\; 4) + 2 x(x(z(1\; 2)\; 3)\; 4) \\
+ z(x(1\; 4)\; x(2\; 3))- x(z(1\; x(2\; 3))\; 4) - x(z(x(1\; 4)\; 2)\; 3),
\end{multline*}
\begin{multline*}
z(y(1\; 2)\; y(3\; 4)) - y(1\; z(2\; y(3\; 4))) - x(z(y(1\; 2)\; 4)\; 3) + 2 y(1\; x(z(2\; 4)\; 3))  \\
- z(y(1\; 4)\; x(2\; 3))+ x(z(y(1\; 4)\; 2)\; 3) - y(1\; z(x(2\; 3)\; 4)),
\end{multline*}
\begin{multline*}
z(y(1\; 2)\; x(3\; 4)) - y(1\; z(2\; x(3\; 4))) - x(z(y(1\; 2)\; 3)\; 4) + 2 y(1\; x(z(2\; 3)\; 4)) \\
- z(y(1\; 3)\; x(2\; 4)) + x(z(y(1\; 3)\; 2)\; 4) - y(1\; z(x(2\; 4)\; 3)),
\end{multline*}
\begin{multline*}
z(y(1\; 3)\; y(2\; 4)) + y(1\; z(y(2\; 4)\; 3)) - x(z(y(1\; 3)\; 4)\; 2) + 2 y(1\; y(2\; z(3\; 4))) \\
- z(y(1\; 4)\; y(2\; 3)) - y(1\; z(y(2\; 3)\; 4)) + x(z(y(1\; 4)\; 3)\; 2),
\end{multline*}
\begin{multline*}
z(x(1\; 2)\; y(3\; 4)) - x(z(1\; y(3\; 4))\; 2) - x(z(x(1\; 2)\; 4)\; 3) + 2 x(x(z(1\; 4)\; 2)\; 3) \\
+ z(x(1\; 3)\; y(2\; 4)) - x(z(x(1\; 3)\; 4)\; 2) - x(z(1\; y(2\; 4))\; 3).
\end{multline*}

Calculating the Gr\"obner base of the operad $\wSGD^f$ by means of the package \cite{DotsHij}, 
we get the following result:
\begin{center}
\begin{tabular}{c|ccccc}
 $n$ & 1 & 2 & 3 & 4 & 5 \\
 \hline 
 $\dim(\wSGD (n)) $ & 1 & 3 & 17 & 130 & 1219
\end{tabular}
\end{center}
The dimensions of $\SGD(n)$ were computed in \cite[Section~4]{KSO2019}. 
For $n\le 5$,  we have $\dim \SGD(n)=\dim \wSGD(n)$.
Hence, all special identities of degree $\le 5$ are corollaries of (\ref{eq:Spec1}) and (\ref{eq:Spec2}). We obtain the following result:

\begin{corollary}
The relations 
(\ref{eq:Spec4}), (\ref{eq:Spec3}) and (\ref{eq:Spec5}) are consequences 
of (\ref{eq:Spec1}) and (\ref{eq:Spec2}). 
\end{corollary}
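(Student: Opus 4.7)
The proof of this corollary is essentially a bookkeeping argument based on the dimension computation just presented, so the plan is to make the logical structure explicit rather than produce any new calculation.

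First, observe that by definition there is a natural surjective morphism of operads
\[
\pi : \wSGD \twoheadrightarrow \SGD,
\]
since $\wSGD $ is obtained from $\GD $ by imposing only the two identities (\ref{eq:Spec1}) and (\ref{eq:Spec2}), both of which are special (hence hold in $\SGD $). Next, I would invoke the result of Section \ref{identities}: all special identities of degree~$\le 5$ are obtained from the list (\ref{eq:Spec1}), (\ref{eq:Spec2}), (\ref{eq:Spec4}), (\ref{eq:Spec3}), (\ref{eq:Spec5}). This means that the operad $\SGD $ truncated in arities $\le 5$ is precisely the quotient of $\GD $ by the $S_n$-orbits of these five identities.

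The key step is then the numerical comparison. The Gr\"obner basis computation reported above yields $\dim \wSGD (n) = \dim \SGD (n)$ for $n\le 5$. Combined with the surjectivity of $\pi $, this forces $\pi (n)$ to be an isomorphism of $S_n$-modules in each arity $n\le 5$. Consequently, every relation holding in $\SGD (n)$ already holds in $\wSGD (n)$ for $n\le 5$; in particular the identities (\ref{eq:Spec4}), (\ref{eq:Spec3}), and (\ref{eq:Spec5}) (which live in arity~5) are consequences of (\ref{eq:Spec1}) and (\ref{eq:Spec2}) modulo the defining relations of $\GD $.

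There is essentially no obstacle once the dimension table is accepted: the argument is a standard ``surjection between equidimensional finite-dimensional spaces is an isomorphism'' reasoning applied arity-by-arity. The only point requiring care is ensuring that the list of degree-5 special identities in Section~\ref{identities} is complete, so that the quotient by the five listed orbits indeed computes $\SGD (n)$ for $n\le 5$; this completeness was already asserted in the preceding discussion of the critical pairs in the rewriting system~$\mathcal G_5(B)$, and it is what legitimizes the comparison with $\dim \wSGD (n)$.
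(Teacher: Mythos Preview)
Your proposal is correct and follows essentially the same approach as the paper: the paper's argument is precisely the dimension comparison $\dim \wSGD(n) = \dim \SGD(n)$ for $n \le 5$, from which the redundancy of the degree-5 identities follows immediately via the surjection $\wSGD \twoheadrightarrow \SGD$ that you describe. Your write-up simply makes the underlying ``surjection between equidimensional finite-dimensional spaces is an isomorphism'' logic explicit, which the paper leaves implicit.
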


It remains an open problem whether (\ref{eq:Spec1}) and (\ref{eq:Spec2}) exhaust all
independent special identities of GD-algebras.

\end{document}